\newtheorem{theorem}{Theorem}
\newtheorem{assumption}{Assumption}
\newtheorem{lemma}{Lemma}
\DeclareMathOperator*{\argmin}{arg\,min}
\title{Nonasymptotic Convergence Rates for Cooperative Learning \\ Over Time-Varying Directed Graphs}
\author{Angelia Nedi\'{c}, Alex Olshevsky and C\'{e}sar A. Uribe
\thanks{The authors are with the Coordinated Science Laboratory, University of Illinois, 1308 West Main Street, Urbana, IL 61801, USA, \{angelia,aolshev2,cauribe2\}@illinois.edu.
This research is supported partially by the National Science Foundation under
grant no. CCF-1017564 and by the Office of Naval Research under grant
no. N00014-12-1-0998.}  
}
\begin{document}
\maketitle
\begin{abstract}
   We study the problem of distributed hypothesis testing with a network of agents where some agents repeatedly gain access to information about the correct hypothesis.  The group objective is to globally agree on a joint hypothesis that best describes  the observed data at all the nodes. We assume that the agents can interact with their neighbors in an unknown sequence of time-varying directed graphs. Following the pioneering work of Jadbabaie, Molavi, Sandroni, and Tahbaz-Salehi, we propose local 
   learning dynamics which combine Bayesian updates at each node with a local aggregation rule of private agent signals.  
   We show that these learning dynamics drive all agents to the set of hypotheses which best explain the data collected at all nodes as long as the sequence of interconnection graphs is uniformly strongly connected.  
   Our main result establishes a non-asymptotic, explicit, geometric convergence rate for the learning dynamic. 
   \end{abstract}


\section{Introduction}
Recent years have seen a considerable amount of work on the analysis of distributed algorithms. Nonetheless, the study of distributed decision making and computation can be traced back to the classic papers \cite{aum76,bor82,tsi84} from the 70s and 80s. Applications of such algorithms range from opinion dynamics analysis, network learning and inference, cooperative robotics, communication networks, to social as well as sensor networks. It is the latter settings of social and sensor networks which is the focus of the current paper.

Interactions among people produce exchange of ideas, opinions, observations and experiences, on which new ideas, opinions, and observations are generated. Analyzing dynamic model of such processes generates insight into human behavior and produce algorithms useful in the sensor networking context. 

We consider an agent network where agents repeatedly receive information from their neighbors and private signals from an external source, which provide samples from random variable with unknown distribution. The agents would like to collectively agree on a hypothesis (distribution) that best explains the data. 

Initial results on learning in social networks are described in~\cite{ace11}, where local update rules are designed such that it matches the Bayes' Theorem. That is, given a prior and new observations, the agent is able to compute likelihood functions in order to generate a new posterior, see~\cite{mue13}. Nevertheless, a fully Bayesian approach might not be possible in general since full knowledge of neither the network structure nor other agents hypothesis might be available~\cite{gal03}. Fortunately, non-Bayesian methods have been shown successful in learning as well. For example, in~\cite{eps10}, the authors propose a modification of Bayes' rule that accounts for over-reactions or under-reactions to new information. 

In a distributed setting, several groundbreaking papers have described ways agents achieve global behaviors by repeatedly aggregating local information in a network \cite{jad12,rah10,ala04}. For example, in distributed hypothesis testing using belief propagation, convergence and dependence of the communication structure were shown \cite{ala04}. Later, extensions to finite capacity channels, packet losses, delayed communications and tracking where developed \cite{sal06,rah07}. In \cite{rah10}, the authors proved convergence in probability, asymptotic normality of the distributed estimation and provided conditions under which the distributed estimation is as good as a centralized one. Later in~\cite{jad12}, almost sure convergence of a non-Bayesian rule based on arithmetic mean was shown for fixed topology graphs. Extensions to information heterogeneity and asymptotic convergence rates have been derived as well \cite{jad13}. Following~\cite{jad12}, other methods to aggregate Bayes estimates in a network have been explored. In \cite{ban14}, geometric means are used for fixed topologies as well, however the consensus and learning steps are separated. The work in~\cite{liu14} extends the results of~\cite{jad12} to time-varying undirected graphs. In~\cite{sha13}, local exponential rates of convergence for undirected gossip-like graphs are studied. 

In this paper we propose a non-Bayesian learning rule, analyze its consistency and derive a non-asymptotic rate of convergence for time-varying directed graphs. Our first result shows consistency: we show that over time, the protocol learns the hypothesis or set of hypotheses which better explain the data collected by all the nodes. Moreover, our main result provides a geometric, non-asymptotic, and explicit characterization of the rate of convergence which immediately leads to finite-time bounds which scale intelligibly with the number of nodes. 

In a simultaneous independent effort, the authors in \cite{lal14b,sha14} proposed a similar non-Bayesian learning algorithm where a local Bayes update is followed by a consensus step. In \cite{lal14b}, convergence result for fixed graphs is provided and large deviation convergence rates are given, proving the existence of a random time after which the beliefs will concentrate exponentially fast. In \cite{sha14}, similar probabilistic bounds for the rate of convergence are derived for fixed graphs and comparisons with the centralized version of the learning rule are provided.  

This paper is organized as follows. 
In Section~\ref{model} we describe the model that we study and the proposed update rule. In Section~\ref{consistency} we analyze the consistency of the information aggregation and estimation models, while in Section~\ref{rates} 
we establish non-asymptotic convergence rates of the agent beliefs. Some conclusions and future work directions are given in 
Section~\ref{conclusions}.

\subsubsection*{Notation}
Upper case letters represent random variables (e.g. $X_k$), 
and the corresponding lower case letters for their realizations (e.g. $x_k$). Subindex will generally indicate the time index.
We write as $\left[A_k\right]_{ij}$ the $i$-th row and $j$-th column entry of matrix $A_k$. 
We write $A'$ for the transpose of a matrix $A$ and $x'$ for the transpose of a vector $x$. 
We use $I$ for the identity matrix. 
Bold letters represent vectors which are assumed to be column vectors. 
The $i$'th entry of a vector will be denoted by a superscript $i$, i.e.,  
$\textbf{x}_k = \left[x_k^1,\hdots,x_k^n\right]'$.
We write $\mathbf{1}_n$ to denote the all-ones vector of size $n$. 
For a sequence of matrices $\{A_t\}$, we let  
$A_{t_f:t_i} \triangleq A_{t_f} \cdots A_{t_i+1} A_{t_i}$ for all $t_f \geq t_i \geq 0$.
We terms "almost surely" and "independent identically distributed" are abbreviated by {\it a.s.} and {\it i.i.d.} respectively.

\section{Problem Setup and Main Results}\label{model}

We consider a group of $n$ agents each of which observes 
a random variable at each time step $k=1, 2, 3, \ldots$. 
We use $S_k^i$ to denote the random variable whose samples are observed by agent $i$ at time step $k$. 
We denote the set of outcomes of the random variable $S_k^i$ by $\mathcal{S}^i$, 
and we assume that this set is finite, i.e., 
$\mathcal{S}^i=\{s_1^i, s_2^i, \ldots, s_{m_i}^i\} \ \hbox{for all }i=1,\ldots,n.$ 
Furthermore, we assume that all $S_k^i$ are i.i.d.\ and drawn according to some  
probability distribution $f^i: \mathcal{S}^i \rightarrow [0,1] $.
For convenience, we stack up all $S_k^i$'s into a single random
vector $\mathbf{S}_k$.

We assume there is a finite set of hypothesis, $\Theta = \{ \theta_1, \theta_2, \ldots, \theta_m\}$ and there is a probability distribution $l_i\left(\cdot|\theta\right)$ for each agent $i$ and hypothesis $\theta \in \Theta$. Intuitively, we think of $l_i( \cdot | \theta)$  as the probability distribution seen by agent $i$ if hypothesis $\theta$ were true. Note that, it is not required for the agents to have an hypothesis that is exactly equal to the unknown distribution $f^i$. The goal of the agents is to 
agree on an element of $\Theta$ that fits all the observations in the network best (in a technical sense to be described soon). 

Agents communicate with their neighbors, this communication is modedeled as a graph $\mathcal{G}_k=\left\{V,E_k\right\}$ composed 
of a node set $V = \left\{1,2, \hdots, n \right\}$ and a set of directed links $E_k$. 

We will refer to probability distributions over $\Theta$ as {\em beliefs} and assume that agent $i$ begins with an initial belief $\mu_0^i$, which we also refer to as 
its \textit{prior distribution} or \textit{prior belief}. 

This paper focuses in the study of the group dynamics wherein, at time $k$, each agent $i$ updates 
its previous belief $\mu_k^i$ to a new belief $\mu_{k+1}^i$ as follows: 
\begin{align}\label{non_bayes_distributed}
\mu_{k+1}^i\left(\theta\right) & = \frac{\prod_{j=1}^n\mu_k^j\left(\theta\right)^{\left[A_k\right]_{ij}}l_i\left(s_{k+1}^i|\theta\right)}{\sum_{p=1}^m \prod_{j=1}^n \mu_k^j\left(\theta_p\right)^{\left[A_k\right]_{ij}}l_i\left(s_{k+1}^i|\theta_p\right)},
\end{align} 
with $[A_k]_{ij}>0$ when $i$ receives information from $j$ at time~$k$, and else $[A_k]_{ij}=0$.

The ``weight matrices'' $A_k$ satisfy some technical connectivity conditions which have been previously used 
in convergence analysis of distributed averaging and other consensus algorithms~\cite{ber89,mor05,jad03}. The assumptions on the communication graph are presented next.

\begin{assumption}\label{ass_graph}
	The graph sequence $\{ \mathcal{G}_k \}$ and the matrix sequence $\{A_k\}$ are such that:
	\begin{enumerate}
		\item $A_k$ is row-stochastic with $\left[A_k\right]_{ij} > 0$ if $\left(j,i\right) \in E_k$. 
		\item $A_k$ has positive diagonal entries, $\left[A_k\right]_{ii}>0$.
		\item If $\left[A_k\right]_{ij}>0$ then $\left[A_k\right]_{ij} > \eta$ for some positive constant $\eta$.
		\item $\left\{\mathcal{G}_k\right\}$ is $B$-strongly connected, i.e.,
		there is an integer $B\ge 1$ such that the graph 
		$\left\{V,\bigcup_{i=kB}^{\left(k+1\right)B-1}E_i\right\}$ is strongly connected for all $k \geq 0$ .
	\end{enumerate}
\end{assumption} 

As a measure for the explanatory quality of the hypotheses in the set $\Theta$ we
use the Kullback-Leibler divergence 
between two discrete probability distributions $\textbf{p}$ and $\textbf{q}$:
\begin{align*}
d\left(\textbf{p} \| \textbf{q}\right) & = \sum_{i=1}^n p_i \log\left(\frac{p_i}{q_i}\right).
\end{align*}
Concretely, the quality of hypothesis $\theta_j$ for agent $i$ is measured by the Kullback-Leibler divergence 
$d\left(f^i\left(\cdot\right)\|l_i\left(\cdot | \theta_j\right)\right)$
between the true distribution of the signals $S^i_k$ and the probability distribution $l_i( \cdot | \theta_j)$  
as seen by agent $i$ if hypothesis $\theta_j$ were correct.
We use the following assumption on the agents' best hypotheses.
\begin{assumption}\label{ass_like} 
	The set $\Theta^*$ defined as $\Theta^* \triangleq\bigcap_{i=1}^n \Theta_i$, where 
	$\Theta_i = \argmin\limits_{\theta \in \Theta} d\left(f^i\left(\cdot\right)\|l_i\left(\cdot | \theta \right) \right)$
	for each $i$, is non-empty.
\end{assumption} 

Assumption~\ref{ass_like} is satisfied if there is some ``true state of the world'' $\widehat{\theta} \in \Theta$ such that each 
agent $i$ sees distributions generated according to $\widehat{\theta}$, i.e., $f^i(\cdot) = l_i(\cdot | \widehat{\theta})$. 
However, this need not be the case for Assumption~\ref{ass_like} to hold. 
Indeed, the assumption is considerably weaker as it merely requires 
that the set of hypotheses, which provide the ``best fits'' for each agent, have at least a single element in common. 

We will further require the following assumptions on the initial distribution and the likelihood functions. 
The first of these is sometimes referred to as the Zero Probability Property \cite{gen86}.

\begin{assumption}\label{ass_init}
	For all agents $i=1,\ldots,n$,
	\begin{enumerate}
		\item The prior beliefs on all $\theta^*\in\Theta^*$ are positive, i.e. $\mu_{0}^i\left(\theta^*\right)>0$ for all $\theta^*\in\Theta^*$.
		\item There exists an $\alpha >0$ such that $l_i\left(s^i | \theta \right)  > \alpha$ for all $s^i \in \mathcal{S}^i$ and $\theta \in \Theta$. 
	\end{enumerate}
\end{assumption}
Assumption~\ref{ass_init}.1 can be relaxed to a requirement that all prior beliefs are positive for some $\theta^*\in\Theta^*$.
Both of these conditions are equally complex to be satisfied. 
They can be satisfied by letting each agent have a uniform prior belief,
which is reasonable in the absence of any initial information about the goodness of the hypotheses. 

We now state our first result, which 
asserts that the dynamics in Eq.~\eqref{non_bayes_distributed}
concentrate all agent's believes in the optimal hypothesis set. We provide its proof in Section \ref{consistency}.

\begin{theorem}\label{proof_consistency} Under
	Assumptions~\ref{ass_graph},~\ref{ass_like}, and~\ref{ass_init}, the update rule of Eq.~(\ref{non_bayes_distributed}) 
	has the following property:
	\begin{align*}
	\lim_{k \rightarrow \infty} \mu_k^i\left(\theta\right) =  0 \quad a.s. \quad\forall \theta \notin \Theta^*, ~i = 1, \ldots, n.
	\end{align*} 
\end{theorem}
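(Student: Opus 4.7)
The natural move is to pass to the log belief ratio at a reference hypothesis $\theta^* \in \Theta^*$ (nonempty by Assumption~\ref{ass_like}). Fix any $\theta \notin \Theta^*$; I may assume $\mu_0^i(\theta) > 0$ for all $i$ (otherwise \eqref{non_bayes_distributed} together with $B$-strong connectivity forces $\mu_k^i(\theta) = 0$ for all $i$ after at most $nB$ steps and the claim is immediate), and set $\varphi_k^i(\theta) := \log\bigl(\mu_k^i(\theta^*)/\mu_k^i(\theta)\bigr)$. Since $\mu_k^i(\theta^*) \le 1$, showing $\varphi_k^i(\theta) \to +\infty$ a.s.\ will deliver $\mu_k^i(\theta) \to 0$ a.s. Taking logarithms in~\eqref{non_bayes_distributed} at $\theta^*$ and $\theta$ and subtracting makes the normalizer vanish, yielding the affine stochastic consensus recursion
\begin{align*}
\varphi_{k+1}^i(\theta) \;=\; \sum_{j=1}^n [A_k]_{ij}\,\varphi_k^j(\theta) \;+\; L_k^i(\theta), \qquad L_k^i(\theta) := \log\frac{l_i(S_{k+1}^i | \theta^*)}{l_i(S_{k+1}^i | \theta)}.
\end{align*}
Assumption~\ref{ass_init}.2 makes $|L_k^i|$ uniformly bounded, the i.i.d.\ hypothesis makes the $L_k^i$ independent across $k$, and a direct computation gives $\mathbb{E}[L_k^i(\theta)] = H_i(\theta) := d(f^i \| l_i(\cdot|\theta)) - d(f^i \| l_i(\cdot|\theta^*))$. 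By definition of $\Theta^*$, $H_i(\theta) \ge 0$ for every $i$; because $\theta \notin \bigcap_i \Theta_i$, at least one agent $i^\star$ has $H_{i^\star}(\theta) > 0$.

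\textbf{Unroll and split.} Iterating and decomposing $L_t^j(\theta) = H_j(\theta) + M_t^j(\theta)$ with $M_t^j$ zero-mean, bounded, and independent in $t$, one obtains
\begin{align*}
\varphi_k^i(\theta) \;=\; \sum_j [A_{k-1:0}]_{ij}\,\varphi_0^j(\theta) \;+\; \sum_{t=0}^{k-1}\sum_j [A_{k-1:t+1}]_{ij}\,H_j(\theta) \;+\; \sum_{t=0}^{k-1}\sum_j [A_{k-1:t+1}]_{ij}\,M_t^j(\theta),
\end{align*}
with the convention $A_{k-1:k} = I$. The first term is $O(1)$ uniformly in $k$ using the likelihood bound and Assumption~\ref{ass_init}. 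Since $H_j(\theta) \ge 0$ for every $j$, the second (deterministic) term is at least $H_{i^\star}(\theta) \sum_{t=0}^{k-1}[A_{k-1:t+1}]_{i i^\star}$, so the game reduces to showing that those weights accumulate linearly and that the noise term is sublinear.

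\textbf{Graph mixing and noise.} Assumption~\ref{ass_graph} (positive diagonals, entries at least $\eta$, and $B$-strong connectivity) is exactly what is needed for a standard consensus-type estimate: for any window of length at least $nB$, the backward product $A_{k-1:t+1}$ can be split to expose a length-$nB$ block whose entries are uniformly lower-bounded, from which row-stochasticity of the remaining factor yields $[A_{k-1:t+1}]_{i i^\star} \ge \eta^{nB} =: \gamma$ for every $i$ whenever $k - t - 1 \ge nB$. Hence the deterministic drift is at least $\gamma H_{i^\star}(\theta)(k - nB) = \Omega(k)$. For the noise, independence across $t$ together with $[A_{k-1:t+1}]_{ij} \in [0,1]$ and row-sums equal to one give variance $O(k)$; a Hoeffding--Azuma bound along the natural filtration, combined with Borel--Cantelli, shows the noise term is $O(\sqrt{k\log k})$ a.s. Combining the three pieces, $\varphi_k^i(\theta) \ge ck - O(\sqrt{k\log k}) \to +\infty$ a.s., giving the theorem.

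\textbf{Main obstacle.} The one nontrivial ingredient is the uniform positive lower bound on $[A_{k-1:t+1}]_{i i^\star}$ for all sufficiently large windows. Its proof requires all three parts of Assumption~\ref{ass_graph} in tandem: $B$-strong connectivity provides a directed path from $i^\star$ to $i$ within at most $nB$ rounds, positive diagonals let us thread that path through any window of length $\ge nB$ by padding with self-loops, and the $\eta$ lower bound turns that path into a product-of-weights lower bound $\eta^{nB}$ that is uniform in the window position $t$. Everything else --- cancellation of the normalizer in the log update, identification of $\mathbb{E}[L_k^i]$ with $H_i(\theta)$, and the noise concentration --- is mechanical.
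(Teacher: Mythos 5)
Your proof is correct, and its skeleton matches the paper's: both pass to the log belief ratio $\varphi_k^i(\theta)$ (the paper uses the opposite sign, driving it to $-\infty$), observe that the normalizer cancels to give the affine consensus recursion $\boldsymbol{\varphi}_{k+1}=A_k\boldsymbol{\varphi}_k+\mathcal{L}_{k+1}^\theta$, unroll it, and show a linearly growing drift beats sublinear fluctuations. Where you diverge is in the two technical ingredients. For the drift, the paper adds and subtracts $\sum_t\mathbf{1}_n\phi_t'\boldsymbol{H}(\theta)$, invoking the rank-one limit $A_{k:t}\to\mathbf{1}_n\phi_t'$ of backward products (its Lemma~\ref{lemma_angelia}) together with the bound $\phi_t^j\ge\delta/n$ (Lemma~\ref{lemma_deltabound}), which yields the drift constant $\tfrac{\delta}{n}\|\boldsymbol{H}(\theta)\|_1$; you instead lower-bound the product entries $[A_{k-1:t+1}]_{ii^\star}\ge\eta^{nB}$ directly for windows of length $\ge nB$, which needs only uniform positivity of long products rather than their convergence, at the price of a cruder constant $\eta^{nB}H_{i^\star}(\theta)$. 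For the noise, the paper runs a Ces\`aro argument, applying Kolmogorov's strong law to $\phi_t'\mathcal{L}_t^\theta$ (its Lemma~\ref{waverage}) and concluding $\tfrac1k\boldsymbol{\varphi}_{k+1}\le-\tfrac{\delta}{n}\|\boldsymbol{H}(\theta)\|_1\mathbf{1}_n$ a.s.; you use Hoeffding plus Borel--Cantelli for each fixed $k$ (valid, since for fixed $k$ the summands over $t$ are independent and the row-stochastic weights keep each bounded by $2\log\tfrac1\alpha$), which is more self-contained and already delivers the quantitative $O(\sqrt{k\log k})$ control that foreshadows Theorem~\ref{teo2}. You also explicitly dispose of the case $\mu_0^i(\theta)=0$ for $\theta\notin\Theta^*$, which the paper silently assumes away when it divides and takes logarithms; that is a genuine (if minor) point of added care, since Assumption~\ref{ass_init}.1 only guarantees positive priors on $\Theta^*$.
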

The result states that the agents' beliefs will concentrate on the set $\Theta^*$ asymptotically as $k\to\infty$. 

Our main result is a non-asymptotic explicit convergence rate, given in the following theorem, proven in Section \ref{rates}. 
\begin{theorem}\label{teo2} 
	Let Assumptions~\ref{ass_graph},~\ref{ass_like}, and~\ref{ass_init} hold. 
	Also, let $\rho\in(0,1)$ be a given error percentile (or confidence value).
	Then, the update rule of Eq. (\ref{non_bayes_distributed}) has the following property:
	there exists an integer $\boldsymbol{N}(\rho)$ such that,
	with probability $1 -\rho$, 
	for all $k\ge \boldsymbol{N}(\rho)$ there holds that for any $\theta\not\in \Theta^*$, 
	\begin{align*}
	&\mu_{k}^i\left(\theta \right) \leq \exp\left( -\frac{k}{2}\gamma_2+ \gamma_1\right)
	\quad\forall  i = 1, \ldots, n,\\
	& \text{where} \ \ \ \boldsymbol{N}(\rho)
	\triangleq \frac{8 \left( \log\left(\alpha \right)\right) ^2  \log\left(\frac{1}{\rho} \right) }{\gamma_2^2} + 1 ,\\
	\gamma_1 & \triangleq \max_{\theta^*\in\Theta^*\atop\theta \notin\Theta^*}
	\left\{\max_{1\le i \le n} \log \frac{\mu_0^i(\theta)}{\mu_0^i(\theta^*)}
	+ \frac{ C}{1-\lambda}\|\boldsymbol{H}\left(\theta\right)\|_1\right\},\cr
	\gamma_2 & \triangleq \frac{\delta}{n}\, \min_ {\theta\notin\Theta^*}
	\| \boldsymbol{H}\left(\theta\right)\|_1 
	\end{align*}
	$$ \left[ \boldsymbol{H}\left(\theta\right) \right]_i = 
	d(f^i(\cdot) || l_i(\cdot ~|~ \theta)) - d\left(f^i \left( \cdot\right)  \|l_i\left(\cdot|\theta^*\right)\right), $$
	with $\alpha$ from Assumption~\ref{ass_init}.2.
	
	The constants $C$, $\delta$ and $\lambda$ satisfy the following relations:\\
	\noindent (1)
	For general $B$-connected graph sequences $\{\mathcal{G}_k\}$, 
	\begin{align*}
	C = 2, & \ \ \ \ \lambda \leq \left(1-\eta^{nB}\right)^{\frac{1}{B}}, \ \ \ \delta \geq \frac{1}{\eta^{nB}}. 
	\end{align*}
	\noindent (2) If every matrix $A_k$ is doubly stochastic,
	\begin{align*}
	C = \sqrt{2}, & \ \ \ \ \lambda = \left(1-\frac{\eta}{4n^2} \right)^{\frac{1}{B}}, \ \ \ \delta = 1.
	\end{align*} 
	\noindent (3) If each $\mathcal{G}_k$ is an undirected graph and each $A_k$ is the 
	lazy Metropolis matrix, i.e. the stochastic matrix which satisfies
	\begin{align*}
	[A_k]_{ij} & = \frac{1}{2 \max(d(i), d(j))} ~~~\mbox{ for all } \{i,j\} \in \mathcal{G}_k, \\
	\text{then} \ \ \ \ \  C & = \sqrt{2},  \ \ \ \ \lambda = 1- \frac{1}{\mathcal{O}(n^2) }, \ \ \ \   \delta = 1.
	\end{align*}
\end{theorem}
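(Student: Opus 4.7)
The plan is to fix an arbitrary $\theta^{*}\in\Theta^{*}$ and $\theta\notin\Theta^{*}$, and work with the log-likelihood ratio
$\varphi_{k}^{i}\triangleq\log\bigl(\mu_{k}^{i}(\theta)/\mu_{k}^{i}(\theta^{*})\bigr)$, which has the virtue that the normalization constant in \eqref{non_bayes_distributed} cancels. A direct calculation shows that $\boldsymbol{\varphi}_{k}$ satisfies the affine recursion $\boldsymbol{\varphi}_{k+1}=A_{k}\boldsymbol{\varphi}_{k}+\boldsymbol{L}_{k+1}$, where $[\boldsymbol{L}_{k}]_{i}\triangleq\log\bigl(l_{i}(s_{k}^{i}\mid\theta)/l_{i}(s_{k}^{i}\mid\theta^{*})\bigr)$. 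Unrolling gives $\boldsymbol{\varphi}_{k}=A_{k-1:0}\boldsymbol{\varphi}_{0}+\sum_{t=1}^{k}A_{k-1:t}\boldsymbol{L}_{t}$, with the convention $A_{k-1:k}=I$.

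I would then split $\boldsymbol{L}_{t}=\mathbb{E}[\boldsymbol{L}_{t}]+\boldsymbol{\xi}_{t}=-\boldsymbol{H}(\theta)+\boldsymbol{\xi}_{t}$, where $\{\boldsymbol{\xi}_{t}\}$ is an i.i.d.\ zero-mean sequence (the identity $\mathbb{E}[L_{t}^{i}]=-[\boldsymbol{H}(\theta)]_{i}$ is exactly the difference of KL divergences defining $\boldsymbol{H}$). This decomposes $\varphi_{k}^{i}$ into three pieces: (i) an initial-condition term $[A_{k-1:0}\boldsymbol{\varphi}_{0}]_{i}$ that is at most $\max_{j}\varphi_{0}^{j}$ because $A_{k-1:0}$ is row-stochastic; (ii) a negative drift $-\sum_{t=1}^{k}[A_{k-1:t}\boldsymbol{H}(\theta)]_{i}$; (iii) a bounded-increment sum $\sum_{t=1}^{k}[A_{k-1:t}\boldsymbol{\xi}_{t}]_{i}$ of independent zero-mean terms.

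For the drift (ii) I would use the ergodicity of products of row-stochastic matrices under Assumption~\ref{ass_graph}: there exist stochastic vectors $\boldsymbol{\phi}_{t}$ (the absolute probability sequence) and constants $C,\lambda$ such that $\bigl|[A_{k-1:t}]_{ij}-\phi_{t}^{j}\bigr|\le C\lambda^{k-t}$, with entries of $\boldsymbol{\phi}_{t}$ bounded below by a quantity captured by $\delta$. Since $\theta^{*}\in\Theta_{i}$ for every $i$, each $[\boldsymbol{H}(\theta)]_{i}\ge 0$, so $[A_{k-1:t}\boldsymbol{H}(\theta)]_{i}\ge \boldsymbol{\phi}_{t}\!\cdot\!\boldsymbol{H}(\theta)-C\lambda^{k-t}\|\boldsymbol{H}(\theta)\|_{1}$. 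Summing the geometric tail contributes $\tfrac{C}{1-\lambda}\|\boldsymbol{H}(\theta)\|_{1}$ to $\gamma_{1}$, while the uniform lower bound on $\boldsymbol{\phi}_{t}$ produces the per-step drift $\gamma_{2}$. The three cases in the theorem correspond to (1) classical $B$-connected row-stochastic bounds, (2) the spectral estimate for doubly stochastic products where $\boldsymbol{\phi}_{t}=\mathbf{1}/n$ and $\delta=1$, and (3) the known spectral gap of the lazy Metropolis chain on arbitrary connected graphs.

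For the martingale piece (iii), Assumption~\ref{ass_init}.2 gives $|[\boldsymbol{\xi}_{t}]_{i}|\le 2|\log\alpha|$, and row-stochasticity preserves this bound on each $[A_{k-1:t}\boldsymbol{\xi}_{t}]_{i}$. Hoeffding's inequality then yields $\mathbb{P}\bigl(\sum_{t=1}^{k}[A_{k-1:t}\boldsymbol{\xi}_{t}]_{i}\ge k\gamma_{2}/2\bigr)\le \exp\!\bigl(-k\gamma_{2}^{2}/(8(\log\alpha)^{2})\bigr)$; requiring the right-hand side to be $\le\rho$ isolates $\boldsymbol{N}(\rho)$. Combining (i)--(iii), with probability at least $1-\rho$ and for $k\ge\boldsymbol{N}(\rho)$, one has $\varphi_{k}^{i}\le\gamma_{1}-k\gamma_{2}/2$, which gives $\mu_{k}^{i}(\theta)\le\mu_{k}^{i}(\theta^{*})\exp(\gamma_{1}-k\gamma_{2}/2)\le\exp(\gamma_{1}-k\gamma_{2}/2)$ because $\mu_{k}^{i}(\theta^{*})\le 1$. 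The main obstacle is the drift analysis in (ii): matching the ergodic constants $(C,\lambda,\delta)$ simultaneously in all three regimes — especially the row-stochastic (not doubly stochastic) case, where the absolute probability sequence is non-uniform and time-varying — is the technically subtle step, whereas (i) and (iii) are routine once the drift bound is in hand.
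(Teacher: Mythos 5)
Your proposal is correct and takes essentially the same route as the paper: the same log-ratio variables $\varphi_k^i$, the same unrolled affine recursion, the same drift bound via the absolute probability sequence $\boldsymbol{\phi}_t$ and the constants $(C,\lambda,\delta)$ (the paper's Lemmas~\ref{lemma_angelia}, \ref{lemma_deltabound} and \ref{bound_vhi}), followed by a concentration step. The paper invokes McDiarmid's inequality on $\varphi_{k+1}^i$ as a function of $s_1,\dots,s_{k+1}$ with bounded differences $2\log(1/\alpha)$, which for this sum of independent bounded terms coincides with your Hoeffding argument, so the two proofs are the same in substance.
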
 

Note that $\boldsymbol{H}\left(\theta\right)$ does not depend on $\theta^*$ since $d\left(f^i \left( \cdot\right)  \|l_i\left(\cdot|\theta^*\right)\right)$ is the same for all $\theta^*$. 

In contrast to the previous literature, this convergence rate is 
not only geometric but also non-asymptotic and explicit in the sense of immediately leading to bounds which scale
intelligible in terms of the number of nodes. 
For example, in the case of doubly stochastic matrices, 
Theorem~\ref{teo2} 
immediately implies that, after a transient time, which scales cubically in the number $n$ of nodes, 
the network will achieve exponential decay to the correct answer with the exponent 
$-\frac{1}{2}\min_{\theta^*\in\Theta^*}\|\boldsymbol{H}(\theta)\|_1/n$.

Now, consider the case when Assumption~\ref{ass_init}.1 is relaxed to the following requirement:
{\it The prior beliefs on some $\theta^*\in\Theta^*$ are positive}
(i.e. $\mu_{0}^i\left(\theta^*\right)>0$ for some $\theta^*\in\Theta^*$ and all $i$).
Then, it can be seen that the Theorem~\ref{teo2} is valid with 
$\max_{\theta^*\in\Theta^*}$ and $\min_{\theta^*\in\Theta^*}$ 
replaced, respectively, by
$\max_{\theta^*\in\widetilde\Theta^*}$ and $\max_{\theta^*\in\widetilde\Theta^*}$,
where $\widetilde\Theta^*\subseteq\Theta^*$ is the set of all $\theta^*\in \Theta^*$ for which all the agents priors 
$\mu_{0}^i$ are positive.

\section{Consistency of the Learning Rule}\label{consistency} 
In this section we prove Theorem~\ref{proof_consistency}, which 
provides a statement about the consistency (see~\cite{doo49,gho97}) of the  distributed estimator given in
Eq.~(\ref{non_bayes_distributed}). 
Our analysis will require some auxiliary results. First, we will recall some results from \cite{ned13} 
about the convergence of a product of row stochastic matrices. 

\begin{lemma}\label{lemma_angelia}
	\cite{ned13,ned09} 
	Under Assumption~\ref{ass_graph}, for a graph sequence $\left\{\mathcal{G}_k\right\}$ and each $t\geq 0$, there is a stochastic vector $\phi_t$ (meaning its entries are nonnegative and sum to one) such that for all $i,j$ and $k\geq t$,
	\begin{align*}
	\left|  \left[A_{k:t}\right]_{ij} - \phi_t^j\right|  \leq C \lambda^{k-t} \ \ \ \ \forall \ k \geq t \geq 0
	\end{align*}
	where $C>0$ and $\lambda \in \left(0,1 \right) $ satisfy the relations described in Theorem~\ref{teo2}.
\end{lemma}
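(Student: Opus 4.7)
The plan is to prove that the backward products $A_{k:t}$ converge geometrically to a rank-one matrix $\mathbf{1}\phi_t'$ by using Dobrushin's coefficient of ergodicity together with Assumption~\ref{ass_graph}. First I would introduce the coefficient $\tau(A) = \tfrac{1}{2}\max_{i_1,i_2}\sum_j \bigl|[A]_{i_1 j}-[A]_{i_2 j}\bigr|\in[0,1]$ for row-stochastic $A$, and recall (i) submultiplicativity $\tau(AB)\le \tau(A)\tau(B)$, and (ii) Hajnal's bound $\tau(A)\le 1 - \min_{i_1,i_2}\sum_j\min([A]_{i_1 j},[A]_{i_2 j})$. The key quantitative link is that the total-variation distance between any two rows of $A_{k:t}$ is at most $\tau(A_{k:t})$, so once $\tau(A_{k:t})\to 0$, the rows collapse to a single stochastic vector which I will take as $\phi_t$.

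Next I would establish that for every $t\ge 0$ the product $A_{t+nB-1:t}$ has all entries bounded below by $\eta^{nB}$. This combines three ingredients from Assumption~\ref{ass_graph}: positive diagonals let each agent's mass persist across time, $B$-strong connectivity of the union graph ensures each ordered pair $(j,i)$ is joined by a directed path of length at most $nB$ in the time-varying graph, and the uniform lower bound $\eta$ on positive entries makes any length-$nB$ product of edge weights at least $\eta^{nB}$. Hajnal's inequality then yields $\tau(A_{t+nB-1:t})\le 1-\eta^{nB}$. Submultiplicativity gives
\begin{align*}
\tau(A_{k:t}) \le \bigl(1-\eta^{nB}\bigr)^{\lfloor(k-t)/(nB)\rfloor},
\end{align*}
which is geometric in $k-t$ with the stated $\lambda$ and absolute constant $C$, after absorbing the floor into the prefactor. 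Existence of $\phi_t$ with the stated inequality then follows by a Cauchy argument on the rows, and $\delta\ge 1/\eta^{nB}$ is used later in Section~\ref{rates} as a uniform lower bound on the components of $\phi_t$.

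For the two specialized cases I would replace the Dobrushin contraction by sharper estimates. If each $A_k$ is doubly stochastic, then $\phi_t=(1/n)\mathbf{1}$ is forced (so $\delta=1$), and instead of $\tau$ I would track the $\ell_2$ quantity $\|A_{k:t}\mathbf{x} - \bar x\mathbf{1}\|_2$ with $\bar x = (1/n)\mathbf{1}'\mathbf{x}$, invoking the standard per-step spectral contraction $1-\eta/(4n^2)$ available for doubly-stochastic $B$-connected matrix products, and then converting back from $\ell_2$ to the row-wise bound using the usual $\sqrt{2}$ factor that appears as $C$. If each $\mathcal{G}_k$ is undirected and $A_k$ is the lazy Metropolis matrix, I would directly invoke the known $\mathcal{O}(n^2)$ mixing-time estimate for lazy random walks on connected graphs on $n$ nodes to obtain the stated $\lambda = 1-1/\mathcal{O}(n^2)$.

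The main obstacle I anticipate is the combinatorial claim that $nB$ consecutive steps suffice to push every entry of the product above $\eta^{nB}$; this requires carefully interleaving ``stay-in-place'' moves supplied by positive diagonals with the directed paths guaranteed only in the length-$B$ union, effectively building a time-respecting walk with delays through the sequence $\{\mathcal{G}_k\}$. This is exactly the content of the classical arguments in \cite{ned13,ned09} and I would either cite them verbatim at this point or reproduce the inductive construction of such walks. Once that combinatorial step is in hand, all three cases follow by the contraction/telescoping mechanics outlined above.
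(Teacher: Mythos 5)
Your proposal is essentially the argument behind the cited references: the paper itself gives no proof of this lemma, stating only that it ``may be found in \cite{ned13}'', and the coefficient-of-ergodicity route you sketch (Hajnal's bound on $nB$-step subproducts, submultiplicativity of $\tau$, and a Cauchy argument on the shrinking convex hull of the rows to extract $\phi_t$) is exactly the mechanism used there, as is the spectral/mixing-time treatment of the doubly stochastic and lazy Metropolis cases. The only caveat is that your telescoping yields $\lambda=\left(1-\eta^{nB}\right)^{1/(nB)}$ and a prefactor $C$ of the form $2/(1-\eta^{nB})$ rather than the constants literally printed in Theorem~\ref{teo2} ($\lambda\le(1-\eta^{nB})^{1/B}$, $C=2$); your values agree with the standard statements in \cite{ned09,ned13}, and the paper's printed constants appear to be typos (note also that the paper's $\delta\ge 1/\eta^{nB}$ contradicts its own Lemma~\ref{lemma_deltabound}), so this is a discrepancy with the paper's transcription rather than a gap in your argument.
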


The proof of Lemma \ref{lemma_angelia} may be found in~\cite{ned13}, with the exception of the bounds on $C,\lambda$
for the lazy Metropolis chains which we omit here due to space constraints.

\begin{lemma} \label{lemma_deltabound}
	\cite{ned13} Let the graph sequence $\left\{\mathcal{G}_k\right\}$ satisfy Assumption~\ref{ass_graph}. Define
	\begin{align}\label{eq:defdelta}
	\delta \triangleq\inf_{k\geq 0} \left(\min_{1\leq i\leq n}\left[\mathds{1}'_n  A_{k:0} \right]_i\right).
	\end{align}
	Then, $\delta \geq \eta^{nB}$, and if all $A_k$ are doubly stochastic, then $\delta=1$.
	Furthermore, the sequence
	$\phi_t$ from Lemma \ref{lemma_angelia} satisfies $\phi_t^j \geq \delta/n$ for all $t \geq 0, j = 1, \ldots, n$.
\end{lemma}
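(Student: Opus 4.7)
The plan is to establish the three claims of Lemma~\ref{lemma_deltabound} in turn. The main work is the lower bound $\delta \ge \eta^{nB}$, which controls the minimum column sum of $A_{k:0}$ uniformly in $k$; once in hand, the doubly stochastic case and the bound on $\phi_t^j$ will follow with little extra effort. I would prove $\delta \ge \eta^{nB}$ by splitting on the length of the product.

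When $k < nB$, the ``stay-put'' path through node $j$ (built from diagonals) gives $[A_{k:0}]_{jj} \ge \prod_{t=0}^{k} [A_t]_{jj} \ge \eta^{k+1} \ge \eta^{nB}$, using $\eta \le 1$ and $k+1 \le nB$; hence $[\mathds{1}'_n A_{k:0}]_j \ge \eta^{nB}$. When $k \ge nB$, I would invoke the standard consensus-theoretic fact (see, e.g., \cite{ned13,ned09}) that under Assumption~\ref{ass_graph}, for every $s \ge 0$, every entry of the length-$nB$ product $A_{s+nB-1:s}$ is at least $\eta^{nB}$: informally, $B$-strong connectivity supplies a directed path of length at most $n-1$ between any ordered pair in each $B$-round edge-union, and the positive diagonals allow one to pad such a path to exactly $nB$ steps. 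Decomposing $A_{k:0} = A_{k:nB}\, A_{nB-1:0}$ and using row-stochasticity of the left factor (so that $\sum_l [\mathds{1}'_n A_{k:nB}]_l = n$), we obtain
\begin{align*}
[\mathds{1}'_n A_{k:0}]_j = \sum_{l=1}^n [A_{nB-1:0}]_{lj}\,[\mathds{1}'_n A_{k:nB}]_l \ge \eta^{nB} \cdot n \ge \eta^{nB}.
\end{align*}
This yields $\delta \ge \eta^{nB}$. The doubly stochastic case is immediate: $\mathds{1}'_n A_k = \mathds{1}'_n$ for every $k$, so $\mathds{1}'_n A_{k:0} = \mathds{1}'_n$ and $\delta = 1$.

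For the claim $\phi_t^j \ge \delta/n$: by Lemma~\ref{lemma_angelia}, $[A_{k:t}]_{ij} \to \phi_t^j$ as $k \to \infty$ uniformly in $i$, so $[\mathds{1}'_n A_{k:t}]_j = \sum_i [A_{k:t}]_{ij} \to n\phi_t^j$. Running the two-case argument above with starting index $t$ in place of $0$ gives the analogous uniform lower bound $[\mathds{1}'_n A_{k:t}]_j \ge \delta$ for all $k \ge t$ (reading $\delta$ as the common lower bound on column sums valid across all starting times, which is $\ge \eta^{nB}$ in general and equals $1$ in the doubly stochastic case). Passing $k \to \infty$ gives $n\phi_t^j \ge \delta$, i.e.\ $\phi_t^j \ge \delta/n$. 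The main non-trivial ingredient, and thus the main obstacle, is the standard entry-wise lower bound $\eta^{nB}$ on long products under $B$-strong connectivity; everything else is short combinatorial bookkeeping.
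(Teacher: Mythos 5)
The paper never actually proves this lemma --- it is imported wholesale from \cite{ned13} (the surrounding text even notes that the proof of the companion Lemma~\ref{lemma_angelia} ``may be found in \cite{ned13}''), so there is no in-paper argument to compare against and your proposal has to stand on its own. It essentially does, and by the standard route: the split on $k$ versus $nB$, the diagonal ``stay-put'' bound $[A_{k:0}]_{jj}\ge\eta^{k+1}\ge\eta^{nB}$ for short products, the factorization $A_{k:0}=A_{k:nB}\,A_{nB-1:0}$ combined with $\mathds{1}'_nA_{k:nB}\mathds{1}_n=n$ for long ones, and the observation that double stochasticity forces every column sum of $A_{k:0}$ to equal one are all correct. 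The ingredient you defer to --- every entry of a product of $nB$ consecutive matrices is at least $\eta^{nB}$ under Assumption~\ref{ass_graph} --- is indeed the standard reachability-growth lemma and holds as you sketch it: each of the at least $n-1$ complete $B$-windows contained in $[t,\,t+nB-1]$ enlarges the set of nodes reachable from $j$ by at least one (positive diagonals prevent shrinkage), and padding with diagonal entries gives a positive path product with exactly $nB$ factors, each exceeding $\eta$.

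The one place to be more careful is the final claim. As stated, $\delta$ is the infimum of column sums of products \emph{anchored at time $0$}, whereas your limiting argument needs a lower bound on $[\mathds{1}'_nA_{k:t}]_j$ for an arbitrary starting index $t$. The identity $\mathds{1}'_nA_{k:0}=\mathds{1}'_nA_{k:t}\,A_{t-1:0}$ only transfers bounds in the wrong direction (a small entry of $\phi_t$ can be averaged away in $\phi_0$), so $\phi_t^j\ge\delta/n$ with the literal time-$0$-anchored $\delta$ does not follow from what you establish. You acknowledge this by ``reading $\delta$ as the common lower bound over all starting times,'' which yields $\phi_t^j\ge\eta^{nB}/n$ in general and $\phi_t^j\ge 1/n$ in the doubly stochastic case --- exactly the two facts Theorem~\ref{teo2} consumes --- but it is a reinterpretation of the statement rather than a verbatim proof of it. Either make that substitute definition of $\delta$ (namely $\inf_{k\ge t\ge0}\min_j[\mathds{1}'_nA_{k:t}]_j$) explicit from the outset, or state the last claim with the weaker constants; with that adjustment the argument is complete.
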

Next, we need a technical lemma regarding the weighted average of random variables with a finite variance.
\begin{lemma}\label{waverage} 
	If assumptions \ref{ass_graph}, \ref{ass_like} and \ref{ass_init} hold. Then for a graph sequence $\{\mathcal{G}_k\}$ we have for any $\theta\notin \Theta^*$ and $\theta^*\in\Theta^*$,
	\begin{align*} 
	\lim_{k \rightarrow \infty} 
	\frac{1}{k}\sum_{t=1}^{{ k}} A_{k:t} 
	\mathcal{L}_{t}^{\theta} + \frac{1}{k}{  \sum_{t=1}^k \mathbf{1}_n\boldsymbol{\phi}'_t \boldsymbol{H}\left(\theta\right) } 
	= 0  \ \ \ a.s.
	\end{align*}
	where $\mathcal{L}_t^\theta$ is the random vector with coordinates given by 
	\[ \left[ \mathcal{L}_t^{\theta} \right] _i=  
	\log \frac{l_i\left(S_{t}^i|\theta\right)}{l_i\left(S_{t}^i|\theta^*\right)} \qquad\forall i=1,\ldots,n,\]
	while the vector $\boldsymbol{H}\left(\theta\right)$ has coordinates given by
	$ \left[ \boldsymbol{H}\left(\theta\right)\right]_i = 
	d(f^i(\cdot) || l_i(\cdot ~|~ \theta)) - d\left(f^i (\cdot) \|l_i\left(\cdot|\theta^*\right)\right) $. 
\end{lemma}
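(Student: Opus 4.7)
The plan is to use Lemma~\ref{lemma_angelia} to approximate the row--stochastic product $A_{k:t}$ by its limiting rank--one matrix $\mathbf{1}_n\phi_t'$ with a geometrically small residual, and thereby reduce the claim to a strong law of large numbers for bounded independent random variables. By Lemma~\ref{lemma_angelia}, I would decompose
\[
A_{k:t} = \mathbf{1}_n\phi_t' + E_{k,t},\qquad \max_{i,j}\bigl|[E_{k,t}]_{ij}\bigr|\le C\lambda^{k-t},
\]
so that
\[
\frac{1}{k}\sum_{t=1}^{k}A_{k:t}\mathcal L_t^\theta
= \frac{1}{k}\sum_{t=1}^{k}\mathbf{1}_n\phi_t'\mathcal L_t^\theta
+ \frac{1}{k}\sum_{t=1}^{k} E_{k,t}\mathcal L_t^\theta.
\]
Assumption~\ref{ass_init}.2 yields the deterministic bound $\bigl|[\mathcal L_t^\theta]_i\bigr|\le\log(1/\alpha)$ almost surely, hence $\|E_{k,t}\mathcal L_t^\theta\|_\infty\le nC\log(1/\alpha)\lambda^{k-t}$, and summing the geometric series gives
\[
\Bigl\|\frac{1}{k}\sum_{t=1}^{k}E_{k,t}\mathcal L_t^\theta\Bigr\|_\infty
\le \frac{nC\log(1/\alpha)}{k(1-\lambda)}\xrightarrow[k\to\infty]{}0,
\]
so this residual vanishes on every sample path.

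Next, I would identify the ``mean'' term. Direct computation from the definition of $\mathcal L_t^\theta$ gives
\[
\mathbb E\bigl[\mathcal L_t^\theta\bigr]_i
=\sum_{s\in\mathcal S^i}f^i(s)\log\frac{l_i(s\mid\theta)}{l_i(s\mid\theta^*)}
= -[\boldsymbol H(\theta)]_i,
\]
so the second summand in the statement equals $-\frac{1}{k}\sum_{t=1}^{k}\mathbf{1}_n\phi_t'\,\mathbb E[\mathcal L_t^\theta]$. Putting the two pieces together, the assertion reduces to
\[
\frac{1}{k}\sum_{t=1}^{k}\mathbf{1}_n\bigl(\phi_t'\mathcal L_t^\theta - \mathbb E[\phi_t'\mathcal L_t^\theta]\bigr)\longrightarrow 0\quad a.s.
\]

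Finally, set $\xi_t := \phi_t'\mathcal L_t^\theta$. The stochastic vectors $\phi_t$ are deterministic functionals of the prescribed graph/weight sequence $\{A_k\}$, while $\mathcal L_t^\theta$ depends only on the signal vector $\mathbf S_t$; since the $\mathbf S_t$ are i.i.d.\ across $t$, the $\{\xi_t\}_{t\ge 1}$ are independent. Moreover $|\xi_t|\le\|\phi_t\|_1\log(1/\alpha)=\log(1/\alpha)$, so $\operatorname{Var}(\xi_t)\le(\log(1/\alpha))^2$ uniformly in $t$, and Kolmogorov's condition $\sum_t\operatorname{Var}(\xi_t)/t^2<\infty$ is trivially satisfied. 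Kolmogorov's SLLN for independent, uniformly $L^2$-bounded summands then delivers $\frac{1}{k}\sum_{t=1}^{k}(\xi_t-\mathbb E\xi_t)\to 0$ a.s., completing the proof. The only subtle point to pin down is the independence of $\{\xi_t\}$, which hinges on the observation that the $\phi_t$ are non-random; beyond that, the argument is a clean split into a deterministic geometric tail and a standard SLLN for bounded independent variables.
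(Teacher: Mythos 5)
Your proof is correct and follows essentially the same route as the paper: split $A_{k:t}$ into $\mathbf{1}_n\phi_t'$ plus a residual controlled by Lemma~\ref{lemma_angelia}, identify $\mathbb{E}[\mathcal{L}_t^\theta]=-\boldsymbol{H}(\theta)$, and finish with Kolmogorov's strong law applied to the independent bounded variables $\phi_t'\mathcal{L}_t^\theta$. If anything, your handling of the residual is more careful than the paper's, which merely invokes $A_{k:t}\to\mathbf{1}_n\phi_t'$ for fixed $t$, whereas your uniform geometric bound $C\lambda^{k-t}$ summed over $t\le k$ is what actually justifies that the Ces\`aro-type double sum vanishes.
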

\begin{proof}
	Adding and subtracting $\frac{1}{k}{  \sum_{t=1}^k \mathbf{1}_n\boldsymbol{\phi}'_t \mathcal{L}_{t}^{\theta} }$ yields
	\begin{align}\label{eq:rel} 
	\frac{1}{k}\sum_{t=1}^{{ k}}\left(  A_{k:t} 
	\mathcal{L}_{t}^{\theta} + \mathbf{1}_n\boldsymbol{\phi}'_t \boldsymbol{H}\left(\theta\right)\right)  
	=  
	\frac{1}{k}\sum_{t=1}^{{ k}} \left( A_{k:t} -\mathbf{1}_n\boldsymbol{\phi}'_t \right) 
	\mathcal{L}_{t}^{\theta}  
	+\frac{1}{k} {  \sum_{t=1}^k \mathbf{1}_n\boldsymbol{\phi}'_t    \left( \mathcal{L}_{t}^{\theta} +\boldsymbol{H}\left(\theta\right)\right)  }.
	\end{align}
	By Lemma \ref{lemma_angelia}, 
	$\lim_{k \rightarrow \infty} A_{k:t} =\mathds{1_n}\boldsymbol{\phi}_t'$ for all $t\ge0$. 
	Moreover, each of the entries of $\mathcal{L}_{t}^{\theta}$ are upper bounded by Assumption~\ref{ass_like}. 
	Thus, the first term on the right hand side of Eq.~\eqref{eq:rel} goes to zero as we take the limit over $k \rightarrow \infty$. 
	Regarding the second term in Eq.~\eqref{eq:rel}, by the definition of the KL divergence measure, we have that                                                                                                                                                                                                                                                   
	\begin{align*}
	\mathbb{E}\left[\log \frac{l_i\left(S_{t}^i|\theta\right)}{l_i\left(S_{t}^i|\theta^*\right)}\right] 
	& = \sum_{j=1}^{m_i} f^i\left(s_{j}^i\right)   \log \frac{l_i\left(s_{j}^i|\theta\right)}{l_i\left(s_{j}^i|\theta^*\right)} \\
	& = \sum_{j=1}^{m_i} f^i\left(s_{j}^i\right)\log 
	\left( \frac{l_i\left(s_{j}^i|\theta\right)}{l_i\left(s_{j}^i|\theta^*\right)}\frac{f^i\left(s_{j}^i\right)}{f^i\left(s_{j}^i\right)} \right) \nonumber \\
	& = d\left(f^i(\cdot)\|l_i\left(\cdot|\theta^*\right)\right) - d\left(f^i (\cdot) \|l_i\left(\cdot|\theta\right)\right)
	\end{align*}
	or equivalently $\mathbb{E} \left[ \mathcal{L}_t^{\theta} \right] = -\boldsymbol{H}(\theta)$.
	
	Kolmogorov's strong law of large numbers states that if $\left\{X_t\right\}$ is a sequence of 
	independent random variables with variances such that $\sum_{k=1}^{\infty} \frac{{\rm Var}\left(X_k\right)}{k^2} < \infty$,
	then $\frac{1}{n}\sum_{k=1}^n X_k - \frac{1}{n}\sum_{k=1}^n\mathbb{E}\left[ X_k\right] \rightarrow 0$ a.s.
	Let $X_t=\boldsymbol{\phi}'_t \mathcal{L}_t^{\theta}$. Then, by using
	Assumptions \ref{ass_graph} and~\ref{ass_init}.2, it can be seen
	that ${\sup_{t\ge0}  \text{Var}\left({X_t}\right) < \infty }$. 
	
	The result follows by Lemma~\ref{lemma_angelia} and Kolmogorov's strong law of large numbers.
\end{proof}

With Lemma~\ref{waverage} in place, we are ready to prove  Theorem~\ref{proof_consistency}.
The proof of Theorem~\ref{proof_consistency} (and also Theorem~\ref{teo2}) makes use of the following quantities:
for all $i=1,\ldots,n$ and $k\ge0$,
\begin{align}\label{eq:defphi}
\varphi_{k}^i (\theta)\triangleq\log \frac{\mu_{k}^i\left(\theta\right)}{\mu_{k}^i\left(\theta^*\right)}
\qquad\hbox{for all }\theta\in\Theta,\end{align}
defined for any $\theta^*\in\Theta^*$ (dependence on $\theta^*$ is suppressed).
\begin{proof}
	(\textit{Theorem~\ref{proof_consistency}}) 
	Dividing both sides of (\ref{non_bayes_distributed}) by $\mu_{k+1}^i\left(\theta^*\right)$, then using the log function and the definition of ${\varphi_{k}^i (\theta)}$ we obtain:
	\begin{align*}
	\varphi_{k+1}^i\left(\theta\right)
	& = \sum_{j=1}^n \left[A_k\right]_{ij}\varphi_k^j\left(\theta\right) 
	+ \log \frac{l_i\left(s_{k+1}^i|\theta\right)}{l_i\left(s_{k+1}^i|\theta^*\right)}. 
	\end{align*}
	
	Stacking up the values $\varphi_{k+1}^i\left(\theta\right)$ over agents $i=1, \ldots, n,$ 
	into a single vector $\boldsymbol{\varphi}_{k+1}\left(\theta\right)$, we can compactly write the preceding relations, as follows:
	\begin{align}\label{e6}
	\boldsymbol{\varphi}_{k+1}\left(\theta\right)
	& = A_k \boldsymbol{\varphi}_k\left(\theta\right) + \mathcal{L}_{k+1}^{\theta}, 
	\end{align} 
	which implies that for all $k\ge0,$
	\begin{align}\label{e7}
	\boldsymbol{\varphi}_{k+1}\left(\theta\right)
	& = A_{k:0} \boldsymbol{\varphi}_0\left(\theta\right) 
	+ \sum_{t=1}^{k} A_{k:t} \mathcal{L}_{t}^{\theta} + \mathcal{L}_{k+1}^{\theta} .  
	\end{align} 
	We add and subtract $\sum_{t=1}^{k} \mathbf{1}_n\phi_t' \boldsymbol{H}\left(\theta\right)$ in Eq.~\eqref{e7}, then
	\begin{align*}
	\boldsymbol{\varphi}_{k+1}\left(\theta\right)
	& = A_{k:0} \boldsymbol{\varphi}_0\left(\theta\right) 
	+ \sum_{t=1}^{{ k}} \left( A_{k:t} \mathcal{L}_t^{\theta} + \mathbf{1}_n\phi_t' \boldsymbol{H}\left(\theta\right) \right)
+   \mathcal{L}_{k+1}^{\theta} -  \sum_{t=1}^{k} \mathbf{1}_n\phi_t' \boldsymbol{H}\left(\theta\right) .
	\end{align*}
	By using the lower bounds on $\phi_t$ described in 
	Lemma~\ref{lemma_deltabound} and the fact that $\boldsymbol{H}(\theta,\theta^*) \geq 0$, we obtain
	\begin{align*}
	\boldsymbol{\varphi}_{k+1}\left(\theta\right)
	&\leq A_{k:0} \boldsymbol{\varphi}_0\left(\theta\right) 
	+ \sum_{t=1}^{{ k}} \left( A_{k:t} \mathcal{L}_t^{\theta} + \mathbf{1}_n\phi_t' \boldsymbol{H}\left(\theta\right) \right)+ \mathcal{L}_{k+1}^{\theta}    -  \frac{\delta}{n}k \| \boldsymbol{H}\left(\theta\right)\|_1 \mathbf{1}_n.                                  \end{align*}
	Therefore, we have
	\begin{align*}
	&\lim_{k \rightarrow \infty} \frac{1}{k} \boldsymbol{\varphi}_{k+1}\left(\theta\right)
	\leq  \lim_{k \rightarrow \infty} \frac{1}{k} A_{k:0} \boldsymbol{\varphi}_0\left(\theta\right) 
	- \frac{\delta}{n} \| \boldsymbol{H}\left(\theta\right)\|_1 \mathbf{1}_n  {  + \lim_{k \rightarrow \infty}\frac{1}{k} \mathcal{L}_{k+1}^{\theta}  } 
	+ \lim_{k \rightarrow \infty} \frac{1}{k} 
	\sum_{t=1}^{{ k}} \left(A_{k:t} \mathcal{L}_t^{\theta} +\mathbf{1}_n \phi_t' \boldsymbol{H}\left(\theta\right)\right).
	\end{align*}
	The first term of the right hand side of the preceding relation converges to zero deterministically. 
	The third term goes to zero as well since $\mathcal{L}_t^{\theta}$ is bounded, and 
	the fourth term converges to zero almost surely by Lemma \ref{waverage}. Consequently,
	\begin{align}\label{limit_as}
	\lim_{k \rightarrow \infty} \frac{1}{k}\boldsymbol{\varphi}_{k+1}\left(\theta\right) \leq 
	- \frac{\delta}{n}\| \boldsymbol{H}\left(\theta\right)\|_1 \mathbf{1}_n\qquad a.s.
	\end{align} Now if $\theta \notin \Theta^*$, then 
	$\boldsymbol{H}(\theta,\theta^*) > 0$ and, thus, 
	$\boldsymbol{\varphi}_k\left(\theta\right)\to-\infty$ almost surely. 
	This implies $\boldsymbol{\mu}_{k}\left(\theta\right) \rightarrow 0$ almost surely.
\end{proof}

\section{Non-Asymptotic Rate of Convergence}\label{rates}
In this section, we prove Theorem \ref{teo2}, which states an explicit rate of convergence for
cooperative agent learning process. 
Before proving the theorem, we will estate an auxiliary lemma that 
provides a bound for the expectation of the random variables $\varphi_k^i\left(\theta\right)$ as defined in Eq.~\eqref{eq:defphi}.
\begin{lemma}\label{bound_vhi}
	Let  $\theta^*\in\Theta^*$ be arbitrary, and consider $\varphi^i_{k}\left(\theta\right)$ as defined in Eq.~\eqref{eq:defphi}.
	Then, for any $\theta\not\in\Theta^*$  we have
	\begin{align*}
	\mathbb{E}\left[ \varphi^i_{k+1}\left(\theta\right)\right] & \leq
	\gamma_1 - (k+1) \gamma_2\quad\hbox{for all $i$ and $k\ge0$},
	\end{align*}
	where $\gamma_1$ and $\gamma_2$ are defined in Theorem \ref{teo2}.
\end{lemma}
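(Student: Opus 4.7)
The plan is to take expectation of the unrolled recursion for $\boldsymbol{\varphi}_{k+1}(\theta)$ that was already derived in the proof of Theorem~\ref{proof_consistency} and bound each resulting term separately. Starting from
\[
\boldsymbol{\varphi}_{k+1}(\theta) = A_{k:0}\boldsymbol{\varphi}_0(\theta) + \sum_{t=1}^{k} A_{k:t}\mathcal{L}_t^{\theta} + \mathcal{L}_{k+1}^{\theta}
\]
and using the identity $\mathbb{E}[\mathcal{L}_t^{\theta}] = -\boldsymbol{H}(\theta)$ (established inside the proof of Lemma~\ref{waverage}), I would obtain a deterministic entrywise inequality for $\mathbb{E}[\boldsymbol{\varphi}_{k+1}(\theta)]$, and then maximize/minimize over $\theta^*\in\Theta^*$ and $\theta\notin\Theta^*$ at the very end to pull in the definitions of $\gamma_1$ and $\gamma_2$.

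For the initial-condition term, since $A_{k:0}$ is a product of row-stochastic matrices it is itself row-stochastic, so its $i$-th row acts as a convex combination and gives $[A_{k:0}\boldsymbol{\varphi}_0(\theta)]_i \le \max_j \log(\mu_0^j(\theta)/\mu_0^j(\theta^*))$. For the middle sum I would invoke Lemma~\ref{lemma_angelia}: each $[A_{k:t}]_{ij}$ differs from $\phi_t^j$ by at most $C\lambda^{k-t}$, so entrywise
\[
-[A_{k:t}\boldsymbol{H}(\theta)]_i \le -\phi_t'\boldsymbol{H}(\theta) + C\lambda^{k-t}\|\boldsymbol{H}(\theta)\|_1.
\]
The key structural fact that must be used here is that any $\theta^*\in\Theta^*$ minimizes $d(f^i(\cdot)\|l_i(\cdot|\cdot))$ for \emph{every} agent $i$ simultaneously, so $H_j(\theta)\ge 0$ for all $j$. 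Combining this nonnegativity with the lower bound $\phi_t^j\ge\delta/n$ from Lemma~\ref{lemma_deltabound} yields $\phi_t'\boldsymbol{H}(\theta)\ge (\delta/n)\|\boldsymbol{H}(\theta)\|_1$. Summing $t=1,\ldots,k$ and bounding the geometric series by $C/(1-\lambda)$ produces a drift term of $-k(\delta/n)\|\boldsymbol{H}(\theta)\|_1$ together with the additive constant $(C/(1-\lambda))\|\boldsymbol{H}(\theta)\|_1$. The final term contributes $\mathbb{E}[\mathcal{L}_{k+1}^{\theta}]_i = -H_i(\theta)$, and I would handle it uniformly by extending the same argument with the degenerate factor $A_{k:k+1}:=I$, which supplies the extra $-(\delta/n)\|\boldsymbol{H}(\theta)\|_1$ needed to upgrade the coefficient from $k$ to $k+1$.

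Assembling the three bounds gives
\[
\mathbb{E}[\varphi_{k+1}^i(\theta)] \le \max_j \log\frac{\mu_0^j(\theta)}{\mu_0^j(\theta^*)} + \frac{C}{1-\lambda}\|\boldsymbol{H}(\theta)\|_1 - (k+1)\frac{\delta}{n}\|\boldsymbol{H}(\theta)\|_1,
\]
valid for every $\theta^*\in\Theta^*$ and every $\theta\notin\Theta^*$. Taking the maximum over the free choices of $\theta^*$ and $\theta$ in the first two terms converts them into $\gamma_1$, while taking the minimum of $\|\boldsymbol{H}(\theta)\|_1$ over $\theta\notin\Theta^*$ in the drift term produces $\gamma_2$, matching the stated inequality.

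The main obstacle is purely bookkeeping rather than a new idea: I must carefully track which quantities depend on $\theta^*$ versus $\theta$ so that the maxima and minima land on the correct sides of the inequality, and I must invoke $H_j(\theta)\ge 0$ precisely at the step where $\phi_t'\boldsymbol{H}(\theta)$ is lower-bounded, so that no coordinate-wise lower bound on individual $H_i(\theta)$ is ever required — which is important, since $H_i(\theta)$ can vanish for particular agents even when $\theta\notin\Theta^*$.
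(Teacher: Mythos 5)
Your decomposition and the bounds for the first two terms coincide with the paper's own proof: take expectations in the unrolled recursion, add and subtract $\sum_{t=1}^{k}\mathbf{1}_n\phi_t'\boldsymbol{H}(\theta)$, use Lemma~\ref{lemma_angelia} for the entrywise error $|[A_{k:t}]_{ij}-\phi_t^j|\le C\lambda^{k-t}$, combine the nonnegativity $H_j(\theta)\ge 0$ (which follows from $\theta^*\in\bigcap_i\Theta_i$) with the lower bound $\phi_t^j\ge\delta/n$ from Lemma~\ref{lemma_deltabound} to get the drift $-(\delta/n)\|\boldsymbol{H}(\theta)\|_1$ per step, and sum the geometric series. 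All of that is sound and matches the paper step for step, including the final maximization/minimization that produces $\gamma_1$ and $\gamma_2$.

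The one step that does not work is your treatment of the last term $\mathbb{E}\left[\mathcal{L}_{k+1}^{\theta}\right]=-\boldsymbol{H}(\theta)$. Recasting it as $A_{k:k+1}\mathcal{L}_{k+1}^{\theta}$ with $A_{k:k+1}:=I$ does not let you reuse the same machinery: the $i$-th row of $I$ is $e_i'$, which is not within the Lemma~\ref{lemma_angelia} tolerance of any stochastic vector whose entries are bounded below by $\delta/n$ (and even if you force such a comparison, the resulting error term of order $\|\boldsymbol{H}(\theta)\|_1$ does not fit inside the budget $\frac{C}{1-\lambda}\|\boldsymbol{H}(\theta)\|_1$ already consumed by the sum over $t=1,\dots,k$). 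Extracting $-(\delta/n)\|\boldsymbol{H}(\theta)\|_1$ directly from this term would require the coordinatewise bound $H_i(\theta)\ge(\delta/n)\|\boldsymbol{H}(\theta)\|_1$ for every $i$, which is precisely the bound you correctly disclaim at the end of your write-up, since $H_i(\theta)$ can vanish for individual agents. The paper handles this term by simply discarding it, using only $-\boldsymbol{H}(\theta)\le 0$; this yields the coefficient $k$ rather than $k+1$ in front of $(\delta/n)\|\boldsymbol{H}(\theta)\|_1$ (the paper's own final display indeed reads $-k\frac{\delta}{n}\|\boldsymbol{H}(\theta)\|_1$ before asserting that the result follows), and that weaker bound is all that is actually used in the proof of Theorem~\ref{teo2}. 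So your argument is correct and complete for the bound $\mathbb{E}[\varphi_{k+1}^i(\theta)]\le\gamma_1-k\gamma_2$, but the upgrade from $k$ to $k+1$ as you describe it is a genuine gap --- one the paper itself glosses over.
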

\begin{proof}
	The expected value of Eq.~\eqref{e6} and 
	${\mathbb{E}\left[\mathcal{L}_{k+1}^{\theta}\right]=-\boldsymbol{H}\left(\theta\right)}$, 
	gives
	\begin{align*}
	\mathbb{E}\left[ \boldsymbol{\varphi}_{k+1}\left(\theta\right)\right] 
	= A_k \mathbb{E}\left[ \boldsymbol{\varphi}_k \left(\theta\right) \right] - \boldsymbol{H}\left(\theta\right)
	\end{align*}
	Therefore, by recursion we can see that for all $k\ge0$,
	\begin{align*}\mathbb{E}\left[ \boldsymbol{\varphi}_{k+1}\left(\theta\right)\right] 
	= A_{k:0} \boldsymbol{\varphi}_0(\theta) - \sum_{t=1}^{k} A_{k:t} \boldsymbol{H}\left(\theta\right) 
	- \boldsymbol{H}\left(\theta\right).
	\end{align*}
	By adding and subtracting $\sum_{t=1}^{k} \mathbf{1}_n\phi_t' \boldsymbol{H}\left(\theta\right)$, we obtain
	\begin{align*}
	\mathbb{E}\left[ \boldsymbol{\varphi}_{k+1}\left(\theta\right)\right] 
	& = A_{k:0} \boldsymbol{\varphi}_0(\theta) 
	+ \sum_{t=1}^{k} \left(  \mathbf{1}_n\phi_t'  -A_{k:t}\right) \boldsymbol{H}\left(\theta\right)  - \sum_{t=1}^{k} \mathbf{1}_n\phi_t' \boldsymbol{H}\left(\theta\right)
	- \boldsymbol{H}\left(\theta\right).
	\end{align*}
	We removed the last term of the right hand side in the preceding relation since $\boldsymbol{H}\left(\theta\right)\geq 0$.  Moreover, bounding the entries for the first two terms on the right hand side and using the fact that $A_{k:0}$ is a stochastic matrix, we have that
	\begin{align*}
	\mathbb{E}\left[ \boldsymbol{\varphi}_{k+1}\left(\theta\right)\right] 
	& \le \|\boldsymbol{\varphi}_0(\theta) \|_\infty  \mathbf{1}_n - \sum_{t=1}^{k} \mathbf{1}_n\phi_t' \boldsymbol{H}\left(\theta\right)
	+ \sum_{t=1}^{k} \max_{1\le i,j\le n}|\phi_t^j  -[A_{k:t}]_{ij}| \|\boldsymbol{H}\left(\theta\right)\|_1  \mathbf{1}_n  
	\end{align*}
	Next, we use the upper bound on terms $|\phi_t^j  -[A_{k:t}]_{ij}|$ from Lemma~\ref{lemma_angelia}
	and the lower bound for the entries in $\phi_t$
	as given in Lemma \ref{lemma_deltabound}, and we arrive at the following relation:
	\begin{align*}
	\mathbb{E}\left[ \boldsymbol{\varphi}_{k+1}\left(\theta\right)\right] 
	& \le \|\boldsymbol{\varphi}_0(\theta) \|_\infty  \mathbf{1}_n
	+ \sum_{t=1}^{k} C \lambda^{k-t} \|\boldsymbol{H}\left(\theta\right)\|_1  \mathbf{1}_n  - k \frac{\delta}{n} \|\boldsymbol{H}\left(\theta\right)\|_1\mathbf{1}_n
	\end{align*}
	and the result follows. 
\end{proof}

The proof of Theorem \ref{teo2} uses the McDiarmid's inequality~\cite{mcd89}. This will provide bounds on the probability that the beliefs exceed a given value $\epsilon$. McDiarmid's inequality is provided below.

\begin{theorem}\label{mcd}
	(McDiarmid's inequality \cite{mcd89})
	Let $\left\lbrace X_t\right\rbrace _{t=1}^k = (X_1,\hdots,X_k)$ 
	be a sequence of independent random variables with $X_t \in \mathcal{X}$. If a function 
	$g: \left\lbrace X_t\right\rbrace _{t=1}^k \rightarrow \mathbb{R}$ has bounded differences, i.e.,
	for all $t$,
	\begin{align*}
	\sup\limits_{X_t \in \mathcal{X}} g\left(\hdots,X_t,\hdots \right) 
	-\inf\limits_{Y_t \in \mathcal{X}} g\left(\hdots,Y_t,\hdots \right)  &\leq c_t
	\end{align*}
	then for any $\epsilon > 0$ and all $k\ge1$,
	{\small
		\begin{align*}\label{mcdeq}
		\mathbb{P}\left(g\left(\left\lbrace X_t\right\rbrace_{t=1}^k  \right) 
		- \mathbb{E}\left[g\left(\left\lbrace X_t\right\rbrace_{t=1}^k  \right) \right] \geq \epsilon \right)
		\leq \exp\left( {\frac{-2\epsilon^2}{\sum_{t=1}^kc_t^2}  }\right) 
		\end{align*}
	}
\end{theorem}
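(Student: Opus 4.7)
The plan is to prove McDiarmid's inequality via the method of bounded martingale differences, reducing the tail bound on $g(X_1,\ldots,X_k)$ to a Chernoff argument for a Doob martingale. The crucial ingredient is a conditional application of Hoeffding's lemma, which turns the bounded-differences hypothesis into a subgaussian bound on each martingale increment.

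First I would build the Doob martingale. Set $Z_0 \triangleq \mathbb{E}[g(X_1,\ldots,X_k)]$ and, for $t=1,\ldots,k$, $Z_t \triangleq \mathbb{E}[g(X_1,\ldots,X_k)\mid X_1,\ldots,X_t]$, so that $Z_k = g(X_1,\ldots,X_k)$ and the increments $D_t \triangleq Z_t - Z_{t-1}$ form a martingale difference sequence with $g - \mathbb{E}[g] = \sum_{t=1}^k D_t$. The key structural lemma to establish is that, conditional on $\mathcal{F}_{t-1} \triangleq \sigma(X_1,\ldots,X_{t-1})$, the increment $D_t$ is almost surely supported on an interval of length at most $c_t$. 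Using independence, one writes
\[
Z_t - Z_{t-1} = h(X_1,\ldots,X_{t-1},X_t) - \mathbb{E}_{X_t'}\!\left[h(X_1,\ldots,X_{t-1},X_t')\right],
\]
where $h(x_1,\ldots,x_t) \triangleq \mathbb{E}[g(x_1,\ldots,x_t,X_{t+1},\ldots,X_k)]$ integrates out the independent tail $(X_{t+1},\ldots,X_k)$. The bounded-differences assumption, applied pointwise inside this defining expectation, gives $|h(x_1,\ldots,x_{t-1},x) - h(x_1,\ldots,x_{t-1},y)| \le c_t$ for any $x,y\in\mathcal{X}$, which yields the desired conditional interval of width $c_t$ for $D_t$.

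With the increments controlled, I would apply Hoeffding's lemma conditionally: any mean-zero random variable $Y$ supported on an interval of width $L$ satisfies $\mathbb{E}[e^{sY}] \le e^{s^2 L^2/8}$. Specializing to $D_t$ given $\mathcal{F}_{t-1}$ and iterating with the tower property,
\[
\mathbb{E}\!\left[e^{s(g-\mathbb{E}[g])}\right] = \mathbb{E}\!\left[\prod_{t=1}^k e^{sD_t}\right] \le \exp\!\left(\frac{s^2}{8}\sum_{t=1}^k c_t^2\right),
\]
and the Chernoff bound $\mathbb{P}(g - \mathbb{E}[g] \ge \epsilon) \le e^{-s\epsilon}\,\mathbb{E}[e^{s(g-\mathbb{E}[g])}]$, optimized over $s>0$ at $s = 4\epsilon/\sum_t c_t^2$, produces the stated tail estimate $\exp\!\left(-2\epsilon^2/\sum_{t=1}^k c_t^2\right)$.

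The main obstacle is the conditional range argument for $D_t$: one must carefully invoke independence to represent the Doob increment as a difference of expectations in which only $X_t$ and an independent ghost copy $X_t'$ vary, and then extract the pointwise bound $c_t$ from the essential supremum and infimum under that integral. Once this step is in place, the remainder is the textbook Chernoff-method calculation and the optimization in $s$.
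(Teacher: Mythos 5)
This theorem is quoted in the paper as a known result from \cite{mcd89} and is not proved there, so there is no in-paper argument to compare against. Your proposal is the standard and correct proof: the Doob martingale construction, the conditional range bound on each increment via independence and the bounded-differences hypothesis, the conditional Hoeffding lemma combined with the tower property to bound the moment generating function by $\exp\bigl(s^2 \sum_t c_t^2/8\bigr)$, and the Chernoff optimization at $s = 4\epsilon/\sum_t c_t^2$ yielding $\exp\bigl(-2\epsilon^2/\sum_t c_t^2\bigr)$ are all correct, and the step you flag as the main obstacle (representing $D_t$ as a difference over a ghost copy $X_t'$ of $X_t$) is handled properly.
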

Now, we are ready to prove Theorem~\ref{teo2}.

\begin{proof}
	(\textit{Theorem} \ref{teo2}) First we will express the belief $\mu_{k+1}^i\left(\theta \right)$ in terms of the variable $\varphi_{k+1}^i\left(\theta\right)$. This will allow us to use the McDiarmid's inequality to obtain the concentration bounds.
	By dynamics of Eq.~\eqref{non_bayes_distributed} and Assumption~\ref{ass_init}.1, since
	$\mu_{k+1}^i\left(\theta^*\right)\in(0,1]$ for any $\theta^*\in\Theta^*$, we have
	\begin{align*}
	\mu_{k+1}^i\left(\theta\right) \le \frac{\mu_{k+1}^i\left(\theta\right)}{\mu_{k+1}^i\left(\theta^*\right)}
	=\exp\left( { \varphi^i_{k+1}(\theta)}\right)
	\end{align*}
	Therefore,

		\begin{align*}
		 \mathbb{P}\left(\mu_{k+1}^i\left(\theta \right) 
		\geq \exp\left( { \frac{-k\gamma_2}{2} + \gamma_1 }\right)  \right) 
		&\leq  \mathbb{P}\left(\varphi_{k+1}^i\left(\theta\right) \geq   \frac{-k\gamma_2}{2}  + \gamma_1 \right)  \\
		&   =  \mathbb{P}\left(\varphi_{k+1}^i\left(\theta\right) - \mathbb{E}\left[\varphi_{k+1}^i\left(\theta\right) \right]  \geq   -\frac{k}{2}\gamma_2 + \gamma_1  - \mathbb{E}\left[\varphi_{k+1}^i\left(\theta\right) \right] \right) \\
		&   = \mathbb{P}\left(\varphi_{k+1}^i\left(\theta\right) - \mathbb{E}\left[\varphi_{k+1}^i\left(\theta\right) \right]  \geq   \frac{k}{2}\gamma_2  \right), 
		\end{align*}

	where the last equality follows from Lemma \ref{bound_vhi}.
	
	We now view $\varphi_{k+1}^i\left(\theta\right)$ a function of the random vectors 
	$s_1,\ldots,s_k,s_{k+1}$, see Eq.~\eqref{e7}, where  $s_t=(s_t^1,\ldots,s_t^n)\in\mathcal{S}$ for all $t$.
	Thus, for all $t$ with $1\le t\le k$, we have
	{\small
		\begin{align*}
		\max_{\textbf{s}_t \in \mathcal{S}} \varphi_{k+1}^i\left(\theta\right) - \min_{\textbf{s}_t \in \mathcal{S}} \varphi_{k+1}^i\left(\theta\right) 
		& =  \max_{\textbf{s}_t \in \mathcal{S}} 
		\sum_{j=1 }^{n}\left[  A_{k:t}\right] _{ij} \left[ \mathcal{L}^{\theta}_t \right]_j   -\min_{\textbf{s}_t \in \mathcal{S}}  \sum_{j=1}^{n}\left[  A_{k:t}\right] _{ij} \left[ \mathcal{L}^{\theta}_t\right] _j \\
		&  =  \max_{\textbf{s}_t \in \mathcal{S}} \sum_{j=1}^{n}\left[  A_{k:t}\right] _{ij} \log \frac{l_j\left(s_t^j|\theta \right) }{l_j\left(s_t^j|\theta^* \right)}    -\min_{\textbf{s}_t \in \mathcal{S}}  \sum_{j=1}^{n}\left[  A_{k:t}\right] _{ij} \log \frac{l_j\left(s_t^j|\theta \right) }{l_j\left(s_t^j|\theta^* \right)}  \\
		& \leq  \log \frac{1 }{\alpha} + \log \frac{1}{\alpha}  \\
		& = 2 \log \frac{1}{\alpha}. 
		\end{align*}
	}
	Similarly, from Eq.~\eqref{e7} we can see that
	\[\max_{\textbf{s}_{k+1} \in \mathcal{S}} \varphi_{k+1}^i\left(\theta\right) 
	- \min_{\textbf{s}_{k+1} \in \mathcal{S}} \varphi_{k+1}^i\left(\theta\right) \le 2 \log \frac{1}{\alpha}.\]
	It follows that  $\varphi_{k+1}^i\left(\theta\right)$ has bounded variations and by 
	McDiarmid's inequality (Theorem \ref{mcd}) we obtain the following concentration inequality,
	\begin{align*}
	\mathbb{P}\left(\varphi_{k+1}^i\left(\theta\right) - \mathbb{E}\left[\varphi_{k+1}^i\left(\theta\right) \right]  \geq   \frac{k}{2}\gamma_2  \right) 
	&\leq   \exp\left( { \frac{-\frac{1}{2} \left( k\gamma_2 \right)^2}{\sum_{t=1}^{k+1} \left(  2 \log \frac{1}{\alpha}\right) ^2} } \right)  \\
	&   
	=  \exp\left( { \frac{-\left( k\gamma_2 \right)^2}{ 8 (k+1) \left( \log \frac{1}{\alpha}\right)^2} }\right)  \\
	& 
	\leq   \exp\left( { -\frac{ (k-1)\gamma_2 ^2}{ 8 \left(  \log \alpha\right)^2} }\right) 
	\end{align*}
	Finally, for a given confidence level $\rho$, in order to have $\mathbb{P}\left(\mu_{k}^i\left(\theta \right) \geq \exp\left( { -\frac{k\gamma_2}{2} + \gamma_1 }\right) \right) \leq \rho$ the desired result follows.
\end{proof}
\section{Simulation Results}\label{simulation}
In this section we show simulation results for a group of agents connected over a time-varying directed graph, shown in Figure \ref{grafo}, for some specific weighting matrices. Each agent updates its beliefs according to Eq. (\ref{non_bayes_distributed}).

Note that the graph is such that the edge connecting agent 1 and agent 2 is switching on and off at each time step. Agents 2-6 connecting edges are changing at each time step as well. 

\begin{figure}[H]
	\centering
	\includegraphics[width=0.35\textwidth]{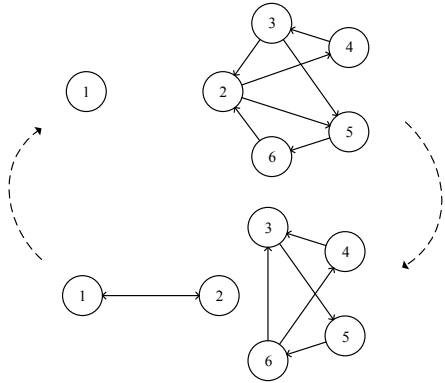}
	\caption{Time-Varying graph with a switching external agent}
	\label{grafo}
\end{figure} 

Every agent $i$ receives information from a binary random variable $S_k^i: \Omega \rightarrow \{0,1\}$ with probability distribution $f^i\left(0\right) = 0.1$ and $f^i\left(1\right) = 0.9$ for all $i$'s. Moreover, every agent has two possible models $\theta_1$ and $\theta_2$. Agent 1 hypotheses have the following likelihood functions: $l_1\left(0 |\theta_1\right)=0.2$ and $l_1\left(1 |\theta_1\right)=0.8$ for hypothesis $\theta_1$; and $l_1\left(0 |\theta_2\right)=0.9$ and $l_1\left(1 |\theta_2\right)=0.1$ for hypothesis $\theta_2$. Therefore, hypothesis $\theta_1$ is closer to the true distribution. On the other hand, agents 2 to 6 have uniformly distributed observationally equivalent hypothesis for both $\theta_1$ and $\theta_2$, that is, they are not able to differentiate between the hypothesis individually. Thus $l_i\left(s|\theta\right) = 0.5$ for $i = \{2,\hdots,6\}$, $s=\{0,1\}$ and $\theta = \{\theta_1,\theta_2\}$. 

Figure \ref{simu_var2} shows the empirical mean over 5000 Monte Carlo simulations of the beliefs on hypothesis $\theta_2$ of agents 1, 4, 5 and 6. Results show that agent 1 is the fastest learning agent, since is the one with the correct model. Nevertheless, all other agents are converging to the correct parameter model as well, even if they do not have differentiable models. 
\begin{figure}[ht]
	\centering
	\includegraphics[width=0.5\textwidth]{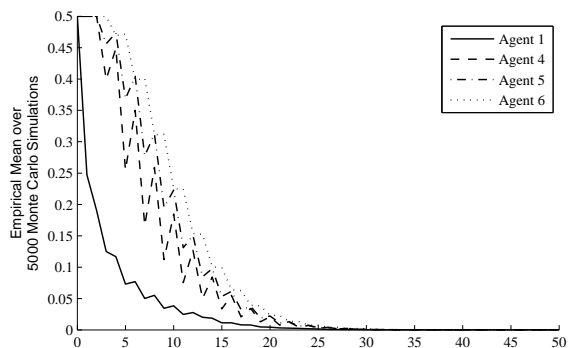}
	\caption{Simulation results for Agents 1, 4, 5 and 6}
	\label{simu_var2}
\end{figure}

\section{Conclusions and Future Work}\label{conclusions}

We have studied the consistency and the rate of convergence
for a distributed non-Bayesian learning system. We have 
shown almost sure consistency and have provided
bounds on the global exponential rate of convergence. The
novelty of our results is in the establishment of convergence rate estimates that are non-asymptotic, geometric, and explicit, in the sense that the bounds capture the quantities characterizing the graph sequence properties as well as the agent learning capabilities. This results were derived for general time-varying directed graphs.

Our work suggests a number of open questions. It is natural to attempt to extensions to continuous spaces, on the number of agents, on the number of hypothesis, etc. This result can be extended to tracking problems where the distribution of the observations changes with time. When the number of hypothesis is large, ideas from social sampling can also be incorporated in this framework \cite{sar13}. Moreover,  the possibility of corrupted measurements or conflicting models between the agents are also of interest, especially in the setting of social networks.

%

\bibliographystyle{IEEEtran} 

\bibliography{IEEEfull,bayes_cons}

\end{document}